
\documentclass[twoside,12pt,leqno]{amsproc}
\usepackage{amssymb,latexsym,enumerate,stmaryrd}

\usepackage{booktabs}  
\usepackage[pagebackref]{hyperref}
\usepackage{amsrefs}    
\usepackage{etoolbox}   
\hypersetup{citecolor=red, linkcolor=blue, colorlinks=true}
\numberwithin{table}{section}
\numberwithin{figure}{section}


\theoremstyle{plain}
\newtheorem{theorem}{Theorem}[section]
\newtheorem{corollary}[theorem]{Corollary}

\newtheorem{proposition}[theorem]{Proposition}
\newtheorem{lemma}[theorem]{Lemma} 
\theoremstyle{definition}\newtheorem{definition}[theorem]{Definition}
\theoremstyle{definition}\newtheorem{example}[theorem]{Example} 
\theoremstyle{definition} 
\theoremstyle{definition}\newtheorem{remark}[theorem]{Remark}   
\theoremstyle{definition}\newtheorem{problem}[theorem]{Problem} 
\AtEndEnvironment{example}{\null\hfill$\triangle$}

\binoppenalty=10000\relpenalty=10000\hyphenpenalty=9200
\oddsidemargin 0pt \evensidemargin 0pt \textheight 8.1in \textwidth 6.3in

\setlength{\parskip}{5pt}

\renewcommand{\geq}{\geqslant}
\renewcommand{\leq}{\leqslant}
\renewcommand{\ge}{\geqslant}
\renewcommand{\le}{\leqslant}

\newcommand{\lhdeq}{\trianglelefteqslant}    

\newcommand{\Alt}{\textrm{Alt}}

\newcommand{\Aut}{\mathrm{Aut}}
\newcommand{\eps}{\varepsilon}

\newcommand{\Inn}{\mathrm{Inn}}
\newcommand{\Out}{\mathrm{Out}}

\newcommand{\sym}{\textup{\textsf{S}}}
\newcommand{\Sym}{\mathrm{Sym}}

\newcommand{\AGL}{\mathrm{AGL}}
\newcommand{\GL}{\mathrm{GL}}

\newcommand{\PSL}{\mathrm{PSL}}
\newcommand{\SL}{\mathrm{SL}}
\newcommand{\soc}{{\mathrm{soc}}}

\newcommand{\C}{\textup{\textsf{C}}}

\newcommand{\HA}{\mathsf{HA}}
\newcommand{\HS}{\mathsf{HS}}
\newcommand{\HC}{\mathsf{HC}}
\newcommand{\TW}{\mathsf{TW}}
\newcommand{\AS}{\mathsf{AS}}
\newcommand{\SD}{\mathsf{SD}}
\newcommand{\CD}{\mathsf{CD}}
\newcommand{\PA}{\mathsf{PA}}

\makeatletter        
\def\@adminfootnotes{%
  \let\@makefnmark\relax  \let\@thefnmark\relax
  \ifx\@empty\@date\else \@footnotetext{\@setdate}\fi
  \ifx\@empty\@subjclass\else \@footnotetext{\@setsubjclass}\fi
  \ifx\@empty\@keywords\else \@footnotetext{\@setkeywords}\fi
  \ifx\@empty\thankses\else \@footnotetext{%
    \def\par{\let\par\@par}\@setthanks}%
  \fi}\makeatother   


\begin{document}

\hyphenation{}

\title{Arc-transitive digraphs with\\ quasiprimitive local actions}
\author[M. Giudici, S.~P.~Glasby, C. H. Li, G. Verret]{Michael Giudici, S.~P.~Glasby, Cai Heng Li, Gabriel Verret}

 \address{Michael Giudici, S.~P.~Glasby$^*$ and Gabriel Verret$^\dag$
\newline\indent Centre for the Mathematics of Symmetry and Computation, 
\newline\indent The University of Western Australia, 
\newline\indent 35 Stirling Highway, Crawley, WA 6009, Australia.}

\address{Cai Heng Li, Department of Mathematics, 
   South University of Science and\newline\indent Technology of China,
   Shenzhen, Guangdong 518055, P. R. China.}

\address{$^*$Also affiliated with The Department of Mathematics, 
\newline\indent University of Canberra, ACT 2601, Australia.}

\address{$\dag$Also affiliated with FAMNIT, University of Primorska, 
\newline\indent Glagolja\v{s}ka 8, SI-6000 Koper, Slovenia.
\newline\indent Current address: Department of Mathematics, The University of Auckland,
\newline\indent Private Bag 92019, Auckland 1142, New Zealand.}

\email{Michael.Giudici@uwa.edu.au}
\email{Stephen.Glasby@uwa.edu.au}
\email{lich@sustc.edu.cn}
\email{g.verret@auckland.ac.nz}

\thanks{This research was supported by Australian Research Council grants DE130101001 and DP150101066.}

\subjclass[2010]{05C25,20B05,20B25}

\keywords{quasiprimitive, arc-transitive, digraph}

\date{\today}

\begin{abstract}
Let $\Gamma$ be a finite $G$-vertex-transitive digraph. The \emph{in-local action} of $(\Gamma,G)$ is the permutation group $L_-$ induced by the vertex-stabiliser on  the set of in-neighbours of $v$. The \emph{out-local action} $L_+$ is defined analogously. Note that $L_-$ and $L_+$ may not be isomorphic. We thus consider the problem of determining which pairs $(L_-,L_+)$ are possible. We prove some general results, but pay special attention to the case when $L_-$ and $L_+$ are both quasiprimitive. (Recall that a permutation group is \emph{quasiprimitive} if each of its nontrivial normal  subgroups is transitive.) Along the way, we prove a structural result about pairs of finite quasiprimitive groups of the same degree, one being (abstractly) isomorphic to a proper quotient of the other.
\end{abstract}

\maketitle

\section{Introduction}\label{sec:intro}

A \emph{digraph} $\Gamma$ is a pair $(V,A)$ where $A$ is a binary relation on~$V$. The set $V$ is called the \emph{vertex set} of $\Gamma$ and its elements are  \emph{vertices}, while $A$ is the \emph{arc set} and its elements are \emph{arcs}. If $(u,v)\in A$, then $u$ is an \emph{in-neighbour} of $v$, and $v$ is an \emph{out-neighbour} of $u$. The set of in-neighbours of a vertex $v$ of $\Gamma$ is denoted $\Gamma^-(v)$ and the set of out-neighbours is denoted $\Gamma^+(v)$.

An \emph{automorphism} of $\Gamma$ is a permutation of~$V$ that preserves $A$. We say that $\Gamma$ is \emph{$G$-vertex-transitive}, or \emph{$G$-arc-transitive}, if $G$ is a group of automorphisms of $\Gamma$ that is transitive on $V$, or $A$, respectively.

If $\Gamma$ is $G$-vertex-transitive, then the \emph{in-local action}  of $(\Gamma,G)$ is the permutation group induced by the vertex-stabiliser $G_v$ on  $\Gamma^-(v)$. The \emph{out-local action} is defined analogously. As we will see in Section~\ref{S:HNN}, the in- and out-local actions need not be permutation isomorphic. This leads to the following natural definition and problem.

We say that two permutation groups  $L_+$ and $L_-$ are \emph{compatible} if there exists a finite $G$-vertex-transitive digraph $\Gamma$ such that $(\Gamma,G)$ has in-local action $L_-$ and out-local action $L_+$.

\begin{problem}\label{MainProblem}
Determine which permutation groups are compatible.
\end{problem}

It appears to be quite difficult to solve Problem~\ref{MainProblem} in full generality. One case which is particularly interesting is when the permutation groups are transitive. (By Lemma~\ref{lem:prop}, if one is transitive, so is the other.) Much of the literature on this topic uses the language of orbitals. Let $G$ be a transitive permutation group on a set $V$, let $u,v\in V$ and let $A$ be the orbit of $(u,v)$ under $G$. The set $A$ is an \emph{orbital} of $G$ and the digraph $\Gamma$ with vertex set $V$ and arc set $A$ is called an \emph{orbital digraph}. Clearly, $\Gamma$ is both $G$-vertex-transitive and $G$-arc-transitive. The in- and out-local actions of $(\Gamma,G)$ are called \emph{paired subconstituents of $G$ (with respect to $A$)}, see \cite{Cambook}*{p.~60}. Using this terminology, the transitive case of Problem~\ref{MainProblem} can be rephrased in the following way: determine which pairs of permutation groups can arise as  paired subconstituents of a finite permutation group.

An unpublished result of Sims shows that nontrivial compatible transitive groups have a common nontrivial homomorphic image~\cite{Quirin}*{Lemma~4.1}.  Cameron showed that, if a transitive group $L$ has a so called ``suitable'' property, then so does every group compatible with $L$~\cite{Cameron}*{Section 1}. Knapp gave a variant of this result by showing that, if a transitive group $L$ has a ``well suitable'' property (which is a stronger requirement than having a suitable property), then every group compatible with $L$ is permutation isomorphic to $L$. Knapp also showed that well suitable properties include: $2$-transitivity, $2$-homogeneity, and having prime degree~\cite{Knapp}*{Section 3}. Finally, Knapp proved that, given two compatible quasiprimitive groups, one is a homomorphic image of the other~\cite{Knapp}*{Theorem~3.3}.

In the first half of the paper, we approach Problem~\ref{MainProblem} in some generality. We first show that compatible groups must have some basic properties in common, including the degree and the number of orbits (Lemma~\ref{lem:prop}). In Section~\ref{S:HNN}, we give a purely group-theoretic characterisation of transitive compatible groups (Theorem~\ref{theo:ProduceExamples}). We then use this to construct various interesting examples of transitive compatible groups.

In the second half of the paper, we consider Problem~\ref{MainProblem} in the case where both groups are quasiprimitive. (Recall that a permutation group is \emph{quasiprimitive} if each of its nontrivial normal  subgroups is transitive. We follow the partition of finite quasiprimitive groups into $8$ types given by~\cite{P}, see Section~\ref{sec:types}.) Our main result is the following.

\begin{theorem}\label{theo:main2}If $L_-$ and $L_+$ are compatible quasiprimitive groups, then either $L_-$ is (abstractly) isomorphic to~$L_+$, or, interchanging $L_-$ and $L_+$ if necessary, $L_+$ is a proper quotient of  $L_-$, and the pair $(L_-, L_+)$ has type $(\HS,\AS)$ or~$(\HC,\TW)$.
\end{theorem}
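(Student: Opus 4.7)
The plan is to reduce to an abstract quotient problem via Knapp's theorem, then to classify the resulting possibilities by the socle structure of $L_-$, invoking the characterisation of compatibility from Section~\ref{S:HNN} to rule out cases where the abstract quotient relation alone is insufficient.

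By Knapp's theorem (\cite{Knapp}*{Theorem~3.3}), one of $L_-, L_+$ is an abstract homomorphic image of the other. If $L_- \cong L_+$ there is nothing to show; otherwise, swapping if necessary, $L_+ \cong L_-/N$ for some normal subgroup $N$ with $1 < N < L_-$. Quasiprimitivity of $L_-$ forces $N$ to be transitive, so $|N| \geq n$, where $n$ is the common degree (Lemma~\ref{lem:prop}); transitivity of $L_+$ further gives $|L_+| \geq n$.

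The core argument is a case analysis on the number of minimal normal subgroups of $L_-$. A quasiprimitive group has either a unique minimal normal subgroup (types $\HA, \AS, \SD, \CD, \TW, \PA$) or exactly two (types $\HS, \HC$), the latter being regular and abstractly isomorphic to a common characteristically simple group. In the two-minimal case, let $M_1, M_2$ be the minimal normal subgroups of $L_-$. By minimality, $N \cap M_i \in \{1, M_i\}$ for each $i$. If both intersections are trivial, then $N$ centralises $\soc(L_-) = M_1 M_2$; but the socle is self-centralising in types $\HS, \HC$, forcing $N = 1$, a contradiction. If $N$ contains both $M_1$ and $M_2$, then $|L_+| \leq |L_-/\soc(L_-)|$, which is bounded by a product of $|\Out(T)|$ (for the relevant simple $T$) and a factorial, strictly smaller than $n$; again a contradiction. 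Hence $N$ equals exactly one of $M_1, M_2$; by symmetry take $N = M_1$. Then $M_2 M_1/M_1 \cong M_2$ is a regular minimal normal subgroup of $L_+ = L_-/M_1$. In type $\HS$, $M_2 \cong T$ is non-abelian simple, so $L_+$ is almost simple with socle acting regularly of degree $|T| = n$; that is, $L_+$ has type $\AS$. In type $\HC$, $M_2 \cong T^k$ with $k \geq 2$, so $L_+$ has non-abelian regular socle and hence type $\TW$.

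It remains to rule out the unique-minimal-normal-subgroup types of $L_-$. Here $N \supseteq \soc(L_-)$, so $L_+$ is a quotient of $L_-/\soc(L_-)$. For types $\AS, \SD, \CD$, the latter embeds in $\Out(T) \wr \sym_k$ (with $k = 1$ in the $\AS$ case), of order at most $|\Out(T)|^k \cdot k!$; classical bounds on $|\Out(T)|$ and minimum faithful degree estimates show this is strictly less than $n$, giving a contradiction. For the remaining types $\HA, \TW, \PA$, abstract quotient pairs of the correct degree can genuinely exist (for instance, $\AGL_d(p)/V \cong \GL_d(p)$ gives a $\HA \to \AS$ abstract quotient pair of degree $p^d$), and so to exclude them one must invoke the compatibility characterisation (Theorem~\ref{theo:ProduceExamples}), which imposes constraints on the kernels of $G_v$'s actions on $\Gamma^\mp(v)$ and on the arc stabilisers; these constraints fail in these residual configurations. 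The main obstacle is precisely this last step: abstract-quotient considerations suffice for the socle-structural arguments in types $\HS$ and $\HC$, but for types $\HA, \TW, \PA$ genuine compatibility (not merely abstract quotient) must be exploited, and translating the digraph-theoretic content of Theorem~\ref{theo:ProduceExamples} into a clean contradiction across these types is the key technical challenge.
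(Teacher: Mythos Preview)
Your proposal has a genuine gap: the cases where $L_-$ has type $\HA$, $\TW$, or $\PA$ are explicitly left unfinished (``the key technical challenge''), and these are exactly where the substance of the theorem lies. Moreover, your grouping of these three types is mistaken. The paper factors the proof through Theorem~\ref{theo:main}, which shows---purely from the abstract quotient relation, without using compatibility---that $L_-$ can only have type $\HS$, $\HC$, or $\HA$. In particular, $\TW$ and $\PA$ for $L_-$ are eliminated abstractly: when $\soc(L_-)\le N$ one splits on whether the image of $N/\soc(L_-)$ meets $\Out(T)^k$, and in each subcase a divisibility contradiction arises via Lemma~\ref{L1} or a degree/prime-power obstruction (Lemma~\ref{lem:outT} for the $\PA$ subcase). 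Your crude order bound $|L_+|\le |\Out(T)|^k\,k!$ is too coarse to do this uniformly; the refined case split is what makes the abstract argument go through.

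What genuinely requires compatibility is only the pair of cases $(\HA,\AS)$ and $(\HA,\PA)$, and here the paper's argument is quite specific. Using Theorem~\ref{theo:ProduceExamples}, one obtains a finite group $H$ with isomorphic subgroups $H_-\cong H_+$ and cores $K_\eps=\core_H(H_\eps)$, so that $H/K_\eps\cong L_\eps$. The key is to compare the composition-factor multisets $[H_-]=[H_+]$, expanded through the chain involving $K_-\cap K_+$, $K_-K_+$, and the preimages $N_\eps$ of $\soc(L_\eps)$. After cancellation one is forced into one of two situations: either $[\C_p^d]\subseteq[H_-/K_-]$, contradicting Proposition~\ref{compfactors} (the point-stabiliser in an $\HA$ group of degree $p^d$ is irreducible in $\GL(d,p)$ and so has at most $d-1$ composition factors of order $p$); or $[T^k]\subseteq[H_+/K_+]$, contradicting Lemma~\ref{CompFactorsPA}. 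Your sketch contains none of this machinery; without the composition-factor bookkeeping exploiting $H_-\cong H_+$, there is no route from Theorem~\ref{theo:ProduceExamples} to a contradiction in the $\HA$ case.
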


As an immediate corollary, if two compatible quasiprimitive groups are not isomorphic, then their degree must be the power of the order of a finite nonabelian simple group. (As this is the only possibility for the degree of a group of type $\HS$ or $\HC$.)

In the process of proving Theorem~\ref{theo:main2}, we also prove the following result, which we believe is of independent interest.

\begin{theorem}\label{theo:main}
Let $G$ and $H$ be finite quasiprimitive groups of the same degree. If $H$ is (abstractly) isomorphic to a proper quotient of $G$, then the type of the pair $(G,H)$ is one of the following:  $(\HS,\AS)$, $(\HC,\TW)$, $(\HA,\AS)$ or $(\HA,\PA)$.
\end{theorem}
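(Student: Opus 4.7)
The plan is to carry out a case analysis on the O'Nan--Scott type of $G$, using throughout the two basic inequalities $|N|\ge n$ (since $G$ is quasiprimitive and $N$ is a nontrivial normal subgroup) and $|G/N|\ge n$ (since $H$ is transitive of degree $n$), which together give $|G|\ge n^2$.

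The first step is to eliminate the types AS, SD, CD, PA, and TW for $G$. The cleanest case is AS: the unique minimal normal $\soc(G)=T$ is self-centralising in $G$, so $T\le N$, whence $G/N$ is a quotient of $G/T\le\Out(T)$. By Schreier (using the classification of finite simple groups) this is solvable, and the well-known bound $|\Out(T)|<n$ for every faithful transitive action of a nonabelian simple group of degree $n$ contradicts $|G/N|\ge n$. The types SD, CD, PA, with $\soc(G)=T^k$, yield the analogous bound $|G/\soc(G)|\le|\Out(T)|^k\cdot k!$; combined with Burnside's theorem (that $|T|$ has at least three distinct prime divisors while $|\Out(T)|^k\cdot k!$ involves very few) this rules them out by a prime-count argument against the degree formulas $n=|T|^{k-1}$ or $n=m^k$. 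For TW the socle is regular of order $n$ and $G/\soc(G)\cong G_\omega$; here I would use the twisted wreath data $\varphi\colon Q\to\Aut(T)$ defining $G_\omega$ to show that no quotient of $G_\omega$ can itself be a quasiprimitive group of degree $|T|^k$ with a regular socle of the correct shape.

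The second step is, for each of $G\in\{\HA,\HS,\HC\}$, to identify the possible types of $H$. For $G$ of type HA, the degree $n=p^d$ is a prime power, so by Burnside the types HS, HC, SD, CD, TW are excluded for $H$, leaving HA, AS, PA. To rule out HA for $H$, let $\tilde V'$ be the preimage in $G$ of $\soc(H)$; then $V=\soc(G)\le\tilde V'$, and using that $G_\omega$ acts faithfully and irreducibly on $V$ one shows that any nontrivial normal $p$-subgroup $P$ of $G_\omega$ has nonzero fixed subspace $V^P$, which is $G_\omega$-invariant and hence equals $V$, forcing $P=1$; this prevents $G_\omega/N_0$ (where $N=V\rtimes N_0$) from containing the required elementary abelian regular socle. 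For $G$ of type HS with $\soc(G)=T_1\times T_2$, the two minimal normals force $N$ to contain some $T_i$; a careful check on $|G/N|$ and on the quasiprimitive quotient structure pins down $N=T_i$, and then $G/N$ is almost simple with socle the image of $T_j$ ($j\ne i$), i.e., type AS. The HC case runs identically with each $T_i$ replaced by $T_i^k$ for some $k\ge2$, and produces $H$ with regular nonabelian socle $T^k$, i.e., type TW.

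The main obstacle is the TW-for-$G$ case, which does not yield to the size and prime-count estimates that handle AS, SD, CD, PA: it requires a direct structural argument on the twisted wreath data $\varphi\colon Q\to\Aut(T)$ combined with the constraint that any lifted socle in $G_\omega$ have the wrong shape. The HA-to-HA exclusion also needs more than counting, relying on the irreducibility and faithfulness of the $G_\omega$-action on the socle of $G$.
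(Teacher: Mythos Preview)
Your overall case split and your treatment of $G$ of type $\AS$, $\HS$, $\HC$ match the paper's arguments. However, there are genuine gaps elsewhere. The most concrete is your $\HA$-to-$\HA$ exclusion: showing that $G_\omega$ has no nontrivial normal $p$-subgroup (via the fixed-space argument) does \emph{not} prevent a proper quotient of $G_\omega$ from having an elementary abelian normal subgroup of order $p^d$; the vanishing of $O_p$ is simply not inherited by quotients, so the last sentence of your argument is a non sequitur. What is actually required is a bound on the number of composition factors of order $p$ in $G_\omega$, and the paper invokes precisely such a result (Proposition~\ref{compfactors}: an irreducible subgroup of $\GL(d,p)$ has at most $d-1$ composition factors of order $p$), which rules out any quotient containing $\C_p^d$ as a subnormal piece.

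Your treatment of $\SD$, $\CD$, $\PA$, $\TW$ is also incomplete. The claimed prime-count argument fails as stated because $|\Out(T)|^k\cdot k!$ does not involve ``very few'' primes: $k!$ is divisible by every prime up to $k$, so for large $k$ all prime divisors of $|T|$ may well appear. For $\TW$ you openly concede you have no argument beyond a hope about twisted-wreath data. The paper handles all of these types at once and without any delicate structural analysis: in each of them $M=\soc(G)=T^k$ is the unique minimal normal subgroup, so $M\le K$ and $G/K$ is a section of $\Out(T)^k\rtimes\sym_k$. One then asks whether the $\Out(T)^k$-part already lies in the image of $K$. If it does, $G/K$ is a section of $\sym_k$, so the degree $n$ divides $k!$, contradicting the elementary estimates of Lemma~\ref{L1}. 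If it does not, $G/K$ acquires a nontrivial soluble normal subgroup and hence has type $\HA$ with prime-power degree, which is then excluded for each possible type of $G$ using Table~\ref{TblONS} and Lemma~\ref{lem:outT}.
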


Finally, we note that all the exceptional pairs of types which occur in Theorems~\ref{theo:main2} and~\ref{theo:main} cannot be avoided (see Examples~\ref{ExampleHCTW} and~\ref{Ex:HAASPA}).

\section{Common properties of compatible groups}\label{sec:prop}

We say that $H$ is a \emph{section} of $G$ if $H$ is isomorphic to a quotient of a subgroup of $G$. If, in addition, $H$ is simple, then we say that it is  a \emph{simple section} of $G$.

\begin{lemma}\label{lem:cf}
If $1=G_n\lhdeq G_{n-1}\lhdeq\cdots\lhdeq G_1\lhdeq G_0=G$ is a subnormal series for $G$ and $T$ is a simple section of $G$, then $T$ is a simple section of $G_i/G_{i+1}$ for some $i\in \{0,\ldots, n-1\}$.
\end{lemma}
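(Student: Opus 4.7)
The plan is to proceed by induction on the length $n$ of the subnormal series. The base case $n=1$ is immediate since $G=G_0/G_1$ (as $G_1=1$). The inductive step will reduce to a single auxiliary claim:

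\smallskip
\noindent\textbf{Auxiliary claim.} \emph{If $T$ is a simple section of a group $H$ and $M\lhdeq H$, then $T$ is a simple section of $M$ or of $H/M$.}
\smallskip

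Granting this, I apply it with $H=G$ and $M=G_1$. If $T$ is a simple section of $G_0/G_1=G/G_1$, then $i=0$ works. Otherwise $T$ is a simple section of $G_1$, and the inductive hypothesis, applied to the shorter subnormal series $1=G_n\lhdeq \cdots \lhdeq G_1$ of $G_1$, yields an index $i\in\{1,\ldots,n-1\}$ such that $T$ is a simple section of $G_i/G_{i+1}$.

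To establish the auxiliary claim, write $T\cong K/N$ with $N\lhdeq K\le H$, and consider the subgroup $(K\cap M)N/N$ of $K/N=T$. Since $K\cap M$ is normal in $K$, this is a normal subgroup of $T$, and the simplicity of $T$ forces the dichotomy $(K\cap M)N=N$ or $(K\cap M)N=K$. In the first case, $K\cap M\le N$, so the natural map $K\to KM/M\cong K/(K\cap M)$ has kernel contained in $N$; thus $N/(K\cap M)$ is a normal subgroup of $KM/M$ with quotient $T$, exhibiting $T$ as a section of the subgroup $KM/M$ of $H/M$. In the second case, the second isomorphism theorem gives
\[
T=K/N=(K\cap M)N/N\cong (K\cap M)/\bigl((K\cap M)\cap N\bigr),
\]
which exhibits $T$ as a quotient of the subgroup $K\cap M$ of $M$. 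Either way, since $T$ is simple, it is a simple section of the relevant group.

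No step here looks substantive: the argument is essentially a single application of the isomorphism theorems combined with simplicity of $T$. The only point requiring mild care is that a subnormal series need not be a normal series, but the inductive setup only uses the fact that $G_1\lhdeq G_0$, which is part of the hypothesis, and then recurses into $G_1$ where $G_2\lhdeq G_1$, etc.
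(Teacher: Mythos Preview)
Your proof is correct and follows essentially the same approach as the paper: the core step is the identical dichotomy argument (your auxiliary claim) that, given $T\cong K/N$ with $N\lhdeq K\le H$ and $M\lhdeq H$, the normal subgroup $(K\cap M)N/N$ of the simple group $K/N$ forces $T$ to be a section of either $M$ or $H/M$. The only cosmetic difference is that the paper, instead of inducting on $n$, picks the largest $i$ with $T$ a section of $G_i$ and runs the dichotomy once at that index.
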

\begin{proof}
Let $i$ be largest with respect to the property that $T$ is a section of $G_i$. Note that $i\in \{0,\ldots, n-1\}$. By definition, there exist $R$ and $S$ such that $S\lhdeq R\leq G_i$ and $R/S\cong T$.  Now, $S(R\cap G_{i+1})$ is a normal subgroup of $R$ containing $S$. Since $R/S$ is simple, either $S(R\cap G_{i+1})=R$ or $R\cap G_{i+1}\leq S$. In the former case, 
$$T\cong R/S=S(R\cap G_{i+1})/S\cong (R\cap G_{i+1})/(S\cap G_{i+1})$$
and $T$ is a section of $G_{i+1}$, contradicting the maximality of $i$. In the latter case, $R/S$ is a section of $R/(R\cap G_{i+1})$ but
$$R/(R\cap G_{i+1})\cong RG_{i+1}/G_{i+1}\leq G_i/G_{i+1}$$
and thus $T$ is a section of $G_i/G_{i+1}$. 
\end{proof}

The main result of this section is the following, which shows that compatible permutation groups share certain properties. 

\begin{lemma}\label{lem:prop}
Compatible groups have the same degree, the same number of orbits, and the same simple sections. In particular, their orders have the same prime divisors and, either both are soluble, or neither is.
\end{lemma}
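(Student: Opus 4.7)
Fix a finite $G$-vertex-transitive digraph $\Gamma$ realising $(L_-,L_+)$ and a vertex $v$, so that $L_\pm=G_v/K_\pm$ where $K_\pm\lhdeq G_v$ is the kernel of the action of $G_v$ on $\Gamma^\pm(v)$. The degree statement is the standard observation that vertex-transitivity forces all in- and out-valencies to be constant and equal, via the double count $|A|=\sum_x|\Gamma^-(x)|=\sum_x|\Gamma^+(x)|$. For the orbit count, decompose $A$ into the $G$-orbitals $\mathcal{O}_1,\dots,\mathcal{O}_k$ it contains; for each $i$ the slice $\mathcal{O}_i(v):=\{w:(v,w)\in\mathcal{O}_i\}$ is a single $G_v$-orbit in $\Gamma^+(v)$ and the paired slice $\mathcal{O}_i^\ast(v):=\{u:(u,v)\in\mathcal{O}_i\}$ is a single $G_v$-orbit in $\Gamma^-(v)$, so $L_+$ and $L_-$ each have exactly $k$ orbits.

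The simple-sections part is the substantive content. My first step is to reduce to the arc-transitive case. Writing $K_{\mathcal{O}_i}\lhdeq G_v$ for the kernel of $G_v$ on $\mathcal{O}_i(v)$, one has $K_+=\bigcap_i K_{\mathcal{O}_i}$ and hence a subdirect embedding $L_+\hookrightarrow\prod_i G_v/K_{\mathcal{O}_i}$, and similarly for $L_-$. Lemma~\ref{lem:cf} applied to the obvious subnormal series of the direct product $\prod_i G_v/K_{\mathcal{O}_i}$ shows that its simple sections are the union of those of the factors; combining this with the fact that each $G_v/K_{\mathcal{O}_i}$ is itself a quotient of $L_+$ (via the projection), the simple sections of $L_\pm$ are exactly the union over $i$ of those of the corresponding direct factor. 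It therefore suffices to prove the equality of simple sections for each pair of paired subconstituents $G_v^{\mathcal{O}(v)}$ and $G_v^{\mathcal{O}^\ast(v)}$.

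For a single orbital $\mathcal{O}$, pick an arc $(v,w)\in\mathcal{O}$, let $H=G_{vw}$, and choose $g\in G$ with $(v,w)^g=(u,v)$ for some $u\in\Gamma^-(v)$; then $H^g=G_{uv}\leq G_v$ is $G$-conjugate (and hence abstractly isomorphic) to $H$, and setting $N_1:=\core_{G_v}(H)$ and $N_2:=\core_{G_v}(H^g)$ one has $L_+=G_v/N_1$ and $L_-=G_v/N_2$. Putting $C:=N_1\cap N_2$ and applying Lemma~\ref{lem:cf} to the subnormal series $1\lhdeq N_1N_2/N_2\lhdeq L_-$ and $1\lhdeq N_1N_2/N_1\lhdeq L_+$ gives
\[
\mathrm{SS}(L_-)=\mathrm{SS}(N_1/C)\cup\mathrm{SS}(G_v/N_1N_2),\qquad
\mathrm{SS}(L_+)=\mathrm{SS}(N_2/C)\cup\mathrm{SS}(G_v/N_1N_2),
\]
so the question reduces to proving $\mathrm{SS}(N_1/C)=\mathrm{SS}(N_2/C)$. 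This last step is the main obstacle: the subgroups $N_1/C$ and $N_2/C$ are commuting normal subgroups of $G_v/C$ with trivial intersection, embedding respectively into $L_-$ and $L_+$ via the subdirect inclusion $G_v/C\hookrightarrow L_+\times L_-$, and my plan is to exploit the $G$-conjugacy between $H$ and $H^g$ (which equates abstract structural invariants of their cores inside $G_v$) together with this embedding to argue that any simple section appearing in one must appear in the other. Once this is done, the ``in particular'' clause is immediate: the primes dividing the order of a finite group are exactly its abelian simple sections, and solubility is equivalent to the absence of nonabelian simple sections.
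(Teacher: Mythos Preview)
Your treatment of the degree and orbit count is fine and matches the paper. The problem is the simple-sections argument: the statement you reduce to, namely $\mathrm{SS}(N_1/C)=\mathrm{SS}(N_2/C)$, is \emph{false in general}, so no amount of exploiting the $G$-conjugacy of $H$ and $H^g$ will prove it. Concretely, take $G_v=\Alt(5)\times\Alt(6)$ with $H=1\times\Alt(5)$ (an embedded $\Alt(5)$ in the second factor) and $H^g=\Alt(5)\times 1$; this situation is realised by an arc-transitive digraph via Theorem~\ref{theo:ProduceExamples}. Then $N_1=\core_{G_v}(H)=1$ while $N_2=\core_{G_v}(H^g)=\Alt(5)\times 1$, so $C=1$ and $N_1/C=1$, $N_2/C\cong\Alt(5)$. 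Thus $\mathrm{SS}(N_1/C)=\varnothing\neq\mathrm{SS}(\Alt(5))=\mathrm{SS}(N_2/C)$, even though $\mathrm{SS}(L_-)=\mathrm{SS}(\Alt(6))=\mathrm{SS}(\Alt(5)\times\Alt(6))=\mathrm{SS}(L_+)$ holds (because $\Alt(5)$ is a section of $\Alt(6)$). The underlying reason your ``conjugacy'' heuristic fails is that $g\notin G_v$, so conjugation by $g$ does not carry $\core_{G_v}(H)$ to $\core_{G_v}(H^g)$; isomorphic subgroups of $G_v$ can have structurally unrelated cores.

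The paper's argument avoids this local trap by proving the stronger statement $\mathrm{SS}(L_+)=\mathrm{SS}(G_v)=\mathrm{SS}(L_-)$, using the global structure of $\Gamma$. After reducing to a (strongly) connected component, one lists the vertices $v_1,\dots,v_n$ so that each $v_i$ with $i\ge 2$ is an out-neighbour of some earlier vertex, and sets $G_i$ to be the pointwise stabiliser of $v_1,\dots,v_i$ together with all their out-neighbours. This yields a subnormal series $1=G_n\lhdeq\cdots\lhdeq G_1\lhdeq G_{v_1}$ in which every factor embeds in $L_+$; Lemma~\ref{lem:cf} then gives $\mathrm{SS}(G_{v_1})\subseteq\mathrm{SS}(L_+)$, and the reverse inclusion is immediate since $L_+$ is a quotient of $G_{v_1}$. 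Repeating with in-neighbours gives $\mathrm{SS}(G_{v_1})=\mathrm{SS}(L_-)$. Note that this genuinely uses connectivity of $\Gamma$, not just the local picture at a single vertex; your purely local approach cannot access the information that makes the result true.
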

\begin{proof}
Let $L_-$ and $L_+$ be compatible permutation groups. By definition, there exists a finite $G$-vertex-transitive digraph $\Gamma$ such that $(\Gamma,G)$ has in-local action $L_-$ and out-local action $L_+$. Recall that, for $\eps\in\{\pm\}$, $L_\eps$ is the permutation group induced by a vertex-stabiliser $G_v$ on $\Gamma^\eps(v)$.

Each arc of $\Gamma$ contributes 1 to the sum $\sum_{v\in V} |\Gamma^-(v)|$, and $1$ to the sum $\sum_{v\in V}|\Gamma^+(v)|$. These sums are thus equal. Since $\Gamma$ is $G$-vertex-transitive, the summands are constant and, since $V$ is finite,  we deduce that $|\Gamma^+(v)|=|\Gamma^-(v)|$ for all $v\in V$. This shows that $L_-$ and $L_+$ have the same degree.

Since $\Gamma$ is $G$-vertex-transitive, the number of orbits of $L_-$ and $L_+$ are both equal to the number of orbits of $G$ on the arc set of $\Gamma$.

Let $\Gamma^*$ be a connected component of $\Gamma$ and let $G^*$ be the permutation group induced on the set of vertices of $\Gamma^*$ by the stabiliser in $G$ of this set of vertices. It is not hard to see that $(\Gamma^*,G^*)$ has the same in- and out-local action as $(\Gamma,G)$. We may thus assume that $\Gamma^*$ is connected, and even strongly connected, by~\cite{GR}*{Lemma~2.6.1}.

Let $(v_1, v_2,\ldots, v_n)$ be an ordering of the vertices of $\Gamma$ such that, for each $i\in\{2,\ldots,n\}$, there exists some $j$ with $j<i$ such that $v_i$ is an out-neighbour of~$v_j$. (Such an ordering always exists since $\Gamma$ is strongly connected.) For $i\in \{1,\dots,n\}$, let $G_i$ be the subgroup of $G$ fixing $(v_1,\dots,v_i)$ and all their out-neighbours pointwise. If $i\in\{2,\dots, n\}$, then there exists $j<i$ such that $v_i$ is an out-neighbour of $v_j$, hence $v_i$ is fixed by $G_{i-1}$. By definition, $G_i$ is the kernel of the action of $G_{i-1}$ on $\Gamma^+(v_i)$. It follows that $G_i$ is normal in $G_{i-1}$ with $G_{i-1}/G_i$ isomorphic to a subgroup of $L_+$. Thus  $1=G_n\lhdeq G_{n-1}\lhdeq\cdots\lhdeq G_1\lhdeq G_{v_1}$ is a subnormal series for $G_{v_1}$ with all factor groups isomorphic to subgroups of $L_+$. By Lemma~\ref{lem:cf},  every simple section of $G_{v_1}$ is a section of $L_+$. On the other hand, $G_{v_1}/G_1\cong L_+$ and thus $L_+$ and $G_{v_1}$ have the same simple sections.

Repeating the above argument with in-neighbours instead of out-neighbours, we get that $L_-$ and $G_{v_1}$ have the same simple sections and thus so do $L_+$ and $L_-$.  This concludes the proof.
\end{proof}

\section{A characterisation of compatible transitive groups and some examples}\label{S:HNN}

In this section, we consider compatible transitive groups. Our main result is the following.

\begin{theorem}\label{theo:ProduceExamples}
If $L_-$ and $L_+$ are transitive permutation groups, then they are compatible if and only if there exists a finite group $H$ with two isomorphic subgroups $H_-$ and $H_+$ such that, for each $\eps\in\{\pm\}$, the permutation group induced by the action of $H$ on the set of cosets of $H_\eps$ is permutation isomorphic to $L_\eps$.
\end{theorem}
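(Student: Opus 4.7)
The plan is to prove the biconditional in two directions, with the section's title signalling that HNN extensions will carry the non-trivial implication.

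Forward direction: Suppose $L_-$ and $L_+$ are compatible, witnessed by a finite $G$-vertex-transitive digraph $\Gamma$ with in- and out-local actions $L_\mp$. Transitivity of $L_+$ forces $G_v$ transitive on $\Gamma^+(v)$, and combined with vertex-transitivity this makes $G$ arc-transitive. Replacing $G$ by its faithful action on $V(\Gamma)$ I may take $G$ finite. Set $H:=G_v$, fix an out-neighbour $w$ and an in-neighbour $u$ of $v$, and define $H_+:=G_{v,w}$, $H_-:=G_{u,v}$. By arc-transitivity some element of $G$ sends $(v,w)$ to $(u,v)$; conjugation by it maps $H_+$ onto $H_-$, in particular $H_+\cong H_-$. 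The orbit--stabiliser theorem supplies an $H$-equivariant bijection $\Gamma^\epsilon(v)\leftrightarrow H/H_\epsilon$, whence the coset action of $H$ on $H/H_\epsilon$ is permutation isomorphic to $L_\epsilon$.

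Reverse direction, main construction: Assume $H$ is finite with isomorphic subgroups $H_\pm$ satisfying the coset condition, and fix an isomorphism $\psi\colon H_-\to H_+$. Form the HNN extension
\[
\tilde G=\langle H,t\mid t^{-1}ht=\psi(h)\ \text{for all}\ h\in H_-\rangle.
\]
By Britton's lemma $H$ embeds in $\tilde G$ with $H\cap t^{-1}Ht=H_+$ and $H\cap tHt^{-1}=H_-$. Consider the coset digraph $\tilde\Gamma$ on vertex set $\tilde G/H$ with arcs $(aH,bH)$ whenever $a^{-1}b\in Ht^{-1}H$. Then $\tilde G$ acts vertex-transitively by left multiplication. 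The out-neighbours of $H$ are $\{ht^{-1}H:h\in H\}$; the stabiliser of $t^{-1}H$ in $H$ is exactly $H\cap t^{-1}Ht=H_+$, so the induced action of $H$ on these out-neighbours is permutation isomorphic to the coset action on $H/H_+$, namely $L_+$. Symmetrically, the in-local action at $H$ is $L_-$.

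Reverse direction, finite reduction (the main obstacle): Since $\tilde\Gamma$ is infinite, I must pass to a finite quotient while preserving local structure. Because $H$ is finite, $\tilde G$ is the fundamental group of a finite graph of finite groups, hence virtually free and in particular residually finite. Set
\[
S:=(H\setminus\{1\})\cup\{t^{-1}h'^{-1}th:h\in H\setminus H_+,\,h'\in H\}\cup\{th'^{-1}t^{-1}h:h\in H\setminus H_-,\,h'\in H\}.
\]
Britton's lemma guarantees every element of $S$ is non-trivial in $\tilde G$ (words of the second type are either already reduced, or a single pinch yields $\psi(h')^{-1}h\notin\{1\}$ because $h\notin H_+$; similarly for the third type), and $|S|<\infty$. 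Residual finiteness produces a finite-index normal subgroup $N\lhdeq\tilde G$ with $N\cap S=\emptyset$. In the finite quotient $G:=\tilde G/N$, the image $\bar H:=HN/N$ is still isomorphic to $H$, and the avoidance $N\cap S=\emptyset$ is precisely what forces the intersection identities $\bar H\cap\bar t^{-1}\bar H\bar t=\bar H_+$ and $\bar H\cap\bar t\bar H\bar t^{-1}=\bar H_-$ to survive in $G$. The coset digraph of $G$ with respect to $\bar H$ and $\bar t$ is then a finite $G$-vertex-transitive digraph whose in- and out-local actions are $L_-$ and $L_+$, completing the proof.
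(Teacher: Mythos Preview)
Your proof is correct. The forward direction is essentially identical to the paper's. For the converse, however, the paper takes a different and more direct route: instead of building the HNN extension and then descending via residual finiteness, it invokes a lemma of Glauberman which says that if $H$ is a finite group with isomorphic subgroups $H_-$ and $H_+$, then inside the symmetric group $S=\Sym(H\times\C_2)$ one can embed $H$ and find $g\in S$ with $(H_-)^g=H_+=H\cap H^g$. Setting $G=\langle H,g\rangle\leq S$ and forming the orbital digraph on $G/H$ through $(H,Hg)$ immediately gives a finite witness, with no passage through an infinite group.

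Your HNN argument is sound: Britton's lemma does give the intersection identities in $\tilde G$, the group $\tilde G$ is virtually free (finite graph of finite groups) hence residually finite, and your finite set $S$ is exactly what is needed to preserve both the injectivity of $H\hookrightarrow G$ and the two intersection conditions in the quotient. The trade-off is that you import a fair amount of combinatorial group theory (Britton's lemma, the structure theory of graphs of groups, residual finiteness of virtually free groups), whereas the paper's approach is a one-line citation to an elementary embedding result. The paper's method also yields the explicit bound $|G|\leq(2|H|)!$ noted in the remark following the theorem, which your construction does not give directly. On the other hand, your approach makes transparent \emph{why} such a finite $G$ should exist: the obstruction to finding it is purely a residual-finiteness question, and the HNN viewpoint would generalise more readily if one wanted, say, to control further properties of $G$.
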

\begin{proof}
First, suppose that $L_-$ and $L_+$ are compatible. By definition,  there exists a finite $G$-vertex-transitive digraph $\Gamma$ such that $(\Gamma,G)$ has in-local action $L_-$ and out-local action $L_+$. Let $v$ be a vertex of $\Gamma$, let $v_+$ be an out-neighbour of $v$, let $v_-$ be an in-neighbour of $v$, let $H=G_v$, let $H_+=G_{(v,v_+)}$, and let $H_-=G_{(v,v_-)}$. For $\eps\in\{\pm\}$, $L_{\eps}$ is the permutation group induced by $H$ on  $\Gamma^\eps(v)$. This is a transitive permutation group with point-stabiliser $H_\eps$ and thus $L_{\eps}$ is permutation isomorphic to the permutation group induced by the action of $H$ on the set of cosets of $H_\eps$. Finally, since $L_+$ is transitive, $\Gamma$ is $G$-arc-transitive and thus there exists $g\in G$ such that $(v_-,v)^g=(v,v_+)$. It follows that $(H_-)^g=H_+$ hence $H_-\cong H+$.

Conversely, suppose that there exists a finite group $H$ with two isomorphic subgroups $H_-$ and $H_+$ such that, for each $\eps\in\{\pm\}$, the permutation group induced by the action of $H$ on the cosets of $H_\eps$ is permutation isomorphic to $L_\eps$. Let $S$ be the symmetric group on $H\times\C_2$.  By~\cite[Lemma 2.3]{Glaub}, there exists an embedding of $H$ into $S$ and an element $g$ of $S$ such that $(H_-)^g=H_+=H\cap H^g$. 

Let $G=\langle H,g\rangle$, let $V$ be the set of cosets of $H$ in $G$ and let $v=H$, viewed as an element of $V$. Note that $G$ acts transitively (but perhaps not faithfully) on $V$, and that $G_v=H$.  Let $A=(v,v^g)^{G}$ and let $\Gamma$ be the digraph $(V,A)$. By construction, $G$ acts transitively on $A$ and thus $G_v$ acts transitively on $\Gamma^+(v)$. Note that $v^g\in \Gamma^+(v)$ and  $G_{vv^g}=H\cap H^g=H_+$. It follows that $G_v^{\Gamma^+(v)}$ is permutation isomorphic to $L_+$. Similarly, $(v^{g^{-1}},v)\in A$ hence $v^{g^{-1}}\in \Gamma^-(v)$ and $G_{v^{g^{-1}}v}=(G_{vv^g})^{g^{-1}}=(H_+)^{g^{-1}}=H_-$ therefore $G_v^{\Gamma^-(v)}$ is permutation isomorphic to $L_-$. Finally, let $\widehat{G}$ be the permutation group induced by the action of $G$ on $V$. Clearly, $\widehat{G}$ is a group of automorphisms of $\Gamma$, $\Gamma$ is $\widehat{G}$-arc-transitive and, by the preceding argument, $(\Gamma,\widehat{G})$ has in-local action $L_-$ and out-local action $L_+$. This completes the proof.
\end{proof}

\begin{remark}
Note that the proof of Theorem~\ref{theo:ProduceExamples} gives more than required. Indeed, we prove that, given a finite group $H$ with two isomorphic subgroups $H_-$ and $H_+$ and $L_-$ and $L_+$ as in the statement, we can find a $G$-arc-transitive digraph $\Gamma$ witnessing that $L_-$ and $L_+$ are compatible such that $|G|\leq (2|H|)!$.
\end{remark}

Theorem~\ref{theo:ProduceExamples} is very powerful and makes it relatively easy to construct many examples of compatible transitive groups. For example, one of the main results of~\cite{CLP} is the construction of a $G$-arc-transitive digraph $\Gamma$ with  $G_v\cong\Alt(6)$ and $(\Gamma,G)$ having in-local and out-local actions the two inequivalent transitive actions of $\Alt(6)$ of degree $6$. Since, in both of these actions, the point-stabiliser is isomorphic to $\Alt(5)$, the existence of such a digraph follows immediately from Theorem~\ref{theo:ProduceExamples}. (In fact, infinitely many examples are constructed in~\cite{CLP} but, given one example, one can always construct infinitely many others using standard covering techniques.)

In the rest of this section, we use Theorem~\ref{theo:ProduceExamples} to produce a few interesting examples of compatible transitive groups.

By Lemma~\ref{lem:prop}, compatible groups must have the same simple sections.  Our next example shows that they need not have the same composition factors.
\begin{example}
Let $H=\Alt(5)\times\Alt(6)$, and let $H_-=\Alt(5)\times 1$ and $H_+=1\times\Alt(5)$, considered as subgroups of $H$. Note that $H_-\cong H_+$. For $\eps\in\{\pm\}$, let $K_\eps=\textup{core}_H(H_\eps)$ and let $L_\eps=H/K_\eps$. Note that $K_-=\Alt(5)\times 1$ while $K_+=1$, thus $L_-\cong\Alt(6)$ and $L_+\cong\Alt(5)\times\Alt(6)$. By Theorem~\ref{theo:ProduceExamples}, $L_-$ and $L_+$ are compatible.
\end{example}

We next give an example of a non-regular primitive group that is compatible with a regular non-primitive group.

\begin{example}\label{eg:primnotcompat}
Let $T$ be a nonabelian simple group, let  $H=T\times T$, let $H_-=T\times 1\leq H$ and let $H_+$ be the diagonal subgroup of $H$. Note that $H_-$ and $H_+$ are both isomorphic to $T$, $H_-$ is normal in $H$ and $L_-=H/H_-\cong T$ while $H_+$ is maximal and core-free in $H$, hence the action of $H$ on the cosets of $H_+$ is faithful and primitive. By Theorem~\ref{theo:ProduceExamples}, the regular permutation group $T$ is compatible with the primitive permutation group $T\times T$. Thus, even when $L_-$ and $L_+$ share a common composition factor $T$, the multiplicities of $T$ can vary.
\end{example}

Specialising Theorem~\ref{theo:ProduceExamples} to the case of regular permutation groups yields the following.

\begin{corollary}\label{cor:ProduceExamples2}
If $L_-$ and $L_+$ are regular permutation groups, then they are compatible if and only if there exists a finite group $H$ with two isomorphic normal subgroups $H_-$ and $H_+$ such that $H/H_-\cong L_-$ and $H/H_+\cong L_+$.
\end{corollary}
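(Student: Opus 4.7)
The plan is to read off the corollary directly from Theorem~\ref{theo:ProduceExamples}, translating the regularity hypothesis on $L_\pm$ into a normality condition on $H_\pm$. By that theorem, $L_-$ and $L_+$ are compatible if and only if there is a finite group $H$ with isomorphic subgroups $H_\pm$ whose coset actions are permutation isomorphic to $L_\pm$, so the task reduces to characterising exactly when these coset actions are regular and, when they are, what the extra data amounts to.

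First I would invoke the standard dictionary: for any $K \leq H$, the action of $H$ on the cosets of $K$ has kernel $\core_H(K)$, and the induced faithful permutation group has point-stabiliser $K/\core_H(K)$. In particular, this induced action is regular if and only if $K = \core_H(K)$, i.e. $K \lhdeq H$, and in that case it is permutation isomorphic to the right-regular representation of $H/K$. Applied with $K = H_\pm$, the regularity of $L_\pm$ therefore forces $H_\pm \lhdeq H$. Moreover, since two regular permutation groups are permutation isomorphic precisely when they are abstractly isomorphic (both being copies of the right-regular representation of the underlying group), the condition ``the induced action on cosets of $H_\pm$ is permutation isomorphic to $L_\pm$'' collapses to the purely abstract statement $H/H_\pm \cong L_\pm$.

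Putting these observations together rewrites the criterion of Theorem~\ref{theo:ProduceExamples} as the statement of the corollary. The converse direction is equally immediate: given $H$ with isomorphic normal subgroups $H_\pm$ and $H/H_\pm \cong L_\pm$, the coset actions are automatically regular and permutation isomorphic to $L_\pm$, so Theorem~\ref{theo:ProduceExamples} applies. There is no real obstacle here; the whole argument is a direct translation via the coset-action dictionary, and the only point worth flagging explicitly is the equivalence between abstract and permutation isomorphism for regular groups, which is what lets the conclusion be phrased entirely in terms of abstract quotients.
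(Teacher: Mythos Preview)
Your proposal is correct and is exactly the intended specialisation: the paper states the corollary without proof, treating it as an immediate consequence of Theorem~\ref{theo:ProduceExamples} via the standard fact that the coset action of $H$ on $H/H_\eps$ is regular precisely when $H_\eps\lhdeq H$, in which case it is permutation isomorphic to the regular representation of $H/H_\eps$. Your explicit observation that permutation isomorphism and abstract isomorphism coincide for regular groups is the one point worth spelling out, and you have done so.
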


In view of Corollary~\ref{cor:ProduceExamples2}, we feel that determining  which regular permutation groups are compatible is a very interesting problem, even from a purely group-theoretical standpoint and without the graph-theoretical motivation. This problem does not seem easy in general. As an example of one of the difficulties involved, we show that the compatibility relation is not an equivalence relation, even on regular permutation groups.

\begin{example}
Let $H=\Alt(5)\times\Sym(5)$, $H_-=\Alt(5)\times 1$ and $H_+=1\times\Alt(5)$. Note that $H_-$ and $H_+$ are isomorphic normal subgroups of $H$. By Corollary~\ref{cor:ProduceExamples2}, it follows that, as regular permutation groups, $H/H_-\cong \Sym(5)$ and $H/H_+\cong\Alt(5)\times\C_2$ are compatible.

Similarly, let $H=\SL(2,5)\times\C_2$  and let $H_-=\C_2\times 1$ and $H_+=1\times\C_2$, considered as normal subgroups of $H$. Again, $H_-\cong H_+$  and it follows by Corollary~\ref{cor:ProduceExamples2} that the regular permutation groups $\Alt(5)\times\C_2$ and $\SL(2,5)$ are compatible.

On the other hand, $\SL(2,5)$ and $\Sym(5)$ have no common nontrivial homomorphic image hence they are not compatible by~\cite{Quirin}*{Lemma 4.1}.
\end{example}

We end this section with the following result.

\begin{proposition}\label{coolcoolProp}
Let $L$ be a group and let $(F_1,\ldots,F_n)$ be the set of factor groups for a subnormal series of $L$. Then, as regular permutation groups, $L$ and $F_1\times\cdots\times F_n$ are compatible.
\end{proposition}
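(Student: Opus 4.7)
The plan is to invoke Corollary~\ref{cor:ProduceExamples2}, which reduces the claim to producing a finite group $H$ with two isomorphic normal subgroups $H_-,H_+$ such that $H/H_-\cong L$ and $H/H_+\cong F_1\times\cdots\times F_n$. I would proceed by induction on $n$, the base case $n=1$ being trivial; but in order for the recursion to carry through, I intend to strengthen the inductive statement by asserting that the witness can be chosen in the form $H=L\times M$ for some finite group $M$, with $H_-=1\times M$.

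For the inductive step, let $1\lhdeq L_{n-1}\lhdeq\cdots\lhdeq L_1\lhdeq L$ be the given subnormal series, so that $L_1$ carries a subnormal series of length $n-1$ with factors $F_2,\ldots,F_n$. Applying the strengthened hypothesis to $L_1$ would yield a group $M'$ and isomorphic normal subgroups $K_-,K_+$ of $K:=L_1\times M'$ satisfying $K_-=1\times M'$, $K/K_-\cong L_1$, and $K/K_+\cong F_2\times\cdots\times F_n$. I would then set $H:=L\times F_1\times K$, $M:=F_1\times K$, $H_-:=1\times F_1\times K$ and $H_+:=L_1\times F_1\times K_+$. The normality of $H_+$ in the direct product $H$ follows from $L_1\lhdeq L$ (from the subnormal series) and $K_+\lhdeq K$ (from the induction); the first projection gives $H/H_-\cong L$; and factor-wise quotients give $H/H_+\cong F_1\times(K/K_+)\cong F_1\times F_2\times\cdots\times F_n$. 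Finally, $H_-\cong F_1\times L_1\times M'$ and, using $K_+\cong K_-\cong M'$, $H_+\cong L_1\times F_1\times K_+\cong L_1\times F_1\times M'$, so $H_+\cong H_-$ as required.

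The hard part is pinning down the right strengthened inductive hypothesis. The bare compatibility statement does not propagate, because verifying $H_+\cong H_-$ in the inductive step reduces to showing $L_1\times K_+\cong K$, which need not hold in general. Insisting that the witness decomposes as $L\times M$ with $H_-=1\times M$ guarantees that $K=L_1\times M'$ and $K_+\cong M'$, after which the direct-product decompositions of $H_+$ and $H_-$ make the abstract isomorphism $H_+\cong H_-$ immediate. Iterating this construction down through the subnormal series then produces the desired witness, and Corollary~\ref{cor:ProduceExamples2} finishes the proof.
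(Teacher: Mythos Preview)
Your inductive argument is correct: the strengthened hypothesis (that the witness has the form $H=L\times M$ with $H_-=1\times M$) is exactly what is needed to make the isomorphism $H_+\cong H_-$ go through at each step, and your verification of normality, quotients, and the isomorphism is accurate.

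The paper, however, bypasses induction entirely with a single direct construction. Writing the subnormal series as $1=X_0\lhdeq X_1\lhdeq\cdots\lhdeq X_n=L$ with $F_i\cong X_i/X_{i-1}$, it takes
\[
H=X_1\times\cdots\times X_n,\qquad H_-=X_1\times\cdots\times X_{n-1}\times 1,\qquad H_+=X_0\times X_1\times\cdots\times X_{n-1},
\]
where in $H_+$ the $i$th coordinate uses the natural inclusion $X_{i-1}\hookrightarrow X_i$. Then $H_-\cong H_+$ is immediate (both are abstractly $X_1\times\cdots\times X_{n-1}$), while $H/H_-\cong X_n=L$ and $H/H_+\cong\prod_i X_i/X_{i-1}\cong\prod_i F_i$. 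The key idea is this ``shift by one'' of the coordinates, which your induction effectively rediscovers but at the cost of accumulating extra $F_i$ factors at each stage: already for $n=2$ your $H$ is $L\times F_1\times L_1$ rather than the paper's $L_1\times L$. Both proofs are valid; the paper's is shorter and produces a smaller witness, while yours illustrates how to bootstrap the compatibility relation one factor at a time.
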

\begin{proof}
Choose a subnormal series 
$$1=X_0\trianglelefteqslant X_1\trianglelefteqslant\cdots\trianglelefteqslant X_n=L,$$ 
with $X_i/X_{i-1}\cong F_i$ for $i\in\{1,\ldots,n\}$. Let 
\begin{align*}
H&=X_1\times \cdots\times X_{n-1}\times X_n,\\
H_-&= X_1\times\cdots\times X_{n-1}\times 1\quad\textup{and}\\
H_+&=X_0\times\cdots \times X_{n-2}\times X_{n-1}.
\end{align*}
Note that $H_-$ and $H_+$ are isomorphic subgroups of $H$ (for $H_+$, in the $i$th direct factor, we use the natural embedding of $X_{i-1}$ in $X_i$).  We have $H/H_-\cong X_n=L$ and 
$$H/H_+\cong (X_1/X_0)\times\cdots\times(X_{n-1}/X_{n-2})\times(X_{n}/X_{n-1})\cong F_1\times\cdots \times F_{n-1}\times F_n.$$
The conclusion now follows by Corollary~\ref{cor:ProduceExamples2}.
\end{proof}

Proposition~\ref{coolcoolProp} has a number of interesting corollaries. For example, as regular permutation groups, an elementary abelian $p$-group is compatible with every group of the same order. 

We could say more about the problem of  determining which regular permutation groups are compatible but this would take us too far afield. Instead, we move on to the main topic of this paper, namely compatible quasiprimitive groups. We first need a few preliminaries.

\section{Preliminaries to the proofs of the main theorems}\label{sec:qp}

The following result follows from~\cite{CompFactors}*{Theorem~1} (set $q=p$ and note that $\eps_2=1$ and $\eps_p\le3/2$).

\begin{proposition}\label{compfactors}
Let $p$ be a prime. An irreducible subgroup of $\GL(d,p)$ has at most $d-1$ composition factors of order $p$.
\end{proposition}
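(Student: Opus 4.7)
My proposal is to derive Proposition~\ref{compfactors} by specialising the general bound of~\cite{CompFactors}*{Theorem~1}: that theorem bounds the number of order-$p$ composition factors of an irreducible subgroup $G \le \GL(d,q)$ by an expression involving constants $\eps_2,\eps_p$ depending on $p$; substituting $q = p$ and the hinted values $\eps_2 = 1$ and $\eps_p \le 3/2$ collapses the bound to $d - 1$. The ``proof'' therefore consists of (i) confirming that the hypotheses of the cited theorem are met, namely that $G$ is an irreducible subgroup of $\GL(d,p)$, and (ii) performing the numerical substitution. There is no new group-theoretic content at this point in the paper beyond this translation.

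If no such reference were available and a self-contained argument were required, I would proceed by induction on $d$. The base case $d=1$ is immediate, since $\GL(1,p)$ is cyclic of order $p-1$ and therefore has no composition factor of order $p$. For the inductive step, the critical first observation is that any irreducible $G \le \GL(d,p)$ satisfies $O_p(G) = 1$: a normal $p$-subgroup $N$ of $G$ has a $G$-invariant fixed-point subspace $V^N$ on $V = \F_p^d$ (invariance coming from $N \lhdeq G$, non-triviality from the fact that $p$-group orbits on an $\F_p$-space have $p$-power size), so irreducibility forces $V^N = V$ and hence $N = 1$.

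With $O_p(G) = 1$ established, I would apply Clifford theory to a minimal normal subgroup $N$, which is then either an elementary abelian $r$-group with $r \ne p$ or a direct product of isomorphic nonabelian simple groups. If $V$ is not $N$-homogeneous, the $N$-isotypic decomposition $V = V_1 \oplus \cdots \oplus V_k$ embeds $G$ into a wreath product $H \wr \Sym_k$ with $\dim V_1 = d/k < d$, and one controls the order-$p$ composition factors using induction on $\dim V_1$ together with the modest $p$-content of $\Sym_k$. If $V$ is $N$-homogeneous, the order-$p$ composition factors are forced into the extension $G/C_G(N)$ and must be bounded via $\Out(N)$ and the minimal faithful representation degrees of the quasi-simple factors of $N$.

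This last ``primitive'' case is the main obstacle: the sharp constant $d - 1$ rests on delicate numerical facts about $|\Out T|$ and on minimal faithful degrees of the quasi-simple groups $T$ arising inside $N$, which is exactly the content packaged by~\cite{CompFactors}*{Theorem~1}. That is why my actual proposal is to defer to the cited theorem rather than attempt the classification-style case analysis from scratch.
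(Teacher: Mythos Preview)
Your proposal is correct and matches the paper's approach exactly: the paper derives Proposition~\ref{compfactors} by citing \cite{CompFactors}*{Theorem~1}, setting $q=p$, and noting $\eps_2=1$ and $\eps_p\le 3/2$. Your additional sketch of a self-contained argument is not needed here, but your decision to defer to the cited theorem is precisely what the paper does.
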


(In fact, the current application was our initial motivation for~\cite{CompFactors}.) We will also need the following number-theoretic lemma.

\begin{lemma}\label{L1}
If $k,\ell,x$ are integers greater than $1$, then
\begin{enumerate}[{\rm (a)}]
\item $x^k$ does not divide $k!$, and\label{first}
\item if $\ell$ divides $k$, then $4^{k-k/\ell}$ does not divides $k!$.\label{second}
\end{enumerate}
\end{lemma}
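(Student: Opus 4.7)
\textbf{Plan for the proof of Lemma~\ref{L1}.} Both parts reduce to comparing $p$-adic valuations, and I intend to handle them together using Legendre's formula
\[
v_p(k!) \;=\; \sum_{i \geq 1} \lfloor k/p^i\rfloor \;=\; \frac{k - s_p(k)}{p-1},
\]
where $s_p(k)$ denotes the sum of the base-$p$ digits of $k$, together with the immediate geometric-series estimate $v_p(k!) < k/(p-1)$. The only input beyond this is that $s_p(k) \geq 1$ for every $k \geq 1$, which is what will make all the strict inequalities strict.

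For part~\eqref{first}, I would pick any prime $p$ dividing $x$, so that $v_p(x^k) = k\, v_p(x) \geq k$. Since $p \geq 2$, the Legendre estimate gives
\[
v_p(k!) \;<\; \frac{k}{p-1} \;\leq\; k \;\leq\; v_p(x^k),
\]
so $x^k \nmid k!$. (A slightly sharper version of the last inequality, using $v_p(k!) \leq (k-1)/(p-1)$, would handle the boundary case $p=2$, $v_p(x)=1$ with equal ease.)

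For part~\eqref{second}, I would specialise to $p=2$. Since $\ell \mid k$, the exponent $k-k/\ell$ is a nonnegative integer, and
\[
v_2\!\left(4^{k-k/\ell}\right) \;=\; 2\!\left(k - \tfrac{k}{\ell}\right) \;=\; \frac{2k(\ell-1)}{\ell} \;\geq\; k,
\]
where the last inequality uses $\ell \geq 2$, hence $(\ell-1)/\ell \geq 1/2$. On the other hand $v_2(k!) = k - s_2(k) \leq k - 1 < k$, since $k \geq 2$ forces $s_2(k) \geq 1$. Combining these, $v_2(4^{k-k/\ell}) > v_2(k!)$, and the divisibility fails.

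There is no real obstacle: the whole lemma is one application of Legendre's formula in each direction. The only point requiring minor care is keeping the final inequalities strict, which is taken care of by the observation $s_p(k) \geq 1$ for $k \geq 1$.
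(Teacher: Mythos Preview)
Your proposal is correct and is essentially the same argument as the paper's: both parts follow from the Legendre estimate $v_p(k!)\le (k-1)/(p-1)\le k-1$, applied in~(a) with any prime $p\mid x$ and in~(b) with $p=2$ together with $2(k-k/\ell)\ge k$ for $\ell\ge2$. The only cosmetic difference is that you phrase the bound via $s_p(k)\ge1$ rather than writing $(k-1)/(p-1)$ directly.
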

\begin{proof}
Let $p$ be a prime and denote the $p$-part of $k!$ by $(k!)_p$. It is well-known and easy to see that
$$
  \log_p(k!)_p\le\frac{k-1}{p-1}\leq k-1.
$$
This immediately shows~(a). Since $\ell\geq 2$, we have $\log_2(4^{k-k/\ell})=2(k-k/\ell)\geq k>\log_2(k!)_2$ and (b) follows. 
\end{proof}

\subsection{Quasiprimitive groups}\label{sec:types}
 We will use the subdivision of quasiprimitive groups into eight types called $\HA, \HS, \HC, \TW, \AS, \SD, \CD$, and $\PA$ given by Praeger in \cite{P}. We now collect a few facts about these groups that can be found in~\cite{P}.

Let $G$ be a quasiprimitive group on $\Omega$ and let $M=\soc(G)$ be the socle of $G$. We have $M\cong T^k$ for some simple group $T$ and $k\geq 1$.  If $T$ is abelian, then $G$ is said to be of type $\HA$. In this case, $T$ is cyclic of order a prime $p$, $M$ is elementary abelian and regular, and a point-stabiliser in $G$ is an irreducible subgroup of $\GL(k,p)$.

Otherwise, $T$ is nonabelian. In this case, the centraliser of $M$ in $G$ is trivial and thus $G$ embeds in $\Aut(M)\cong\Aut(T)\wr\Sym(k)$.   Moreover, $M$ is the unique minimal normal subgroup of $G$, unless $G$ is of type $\HS$ or $\HC$, in which case $G$ has exactly two minimal normal subgroups and they are isomorphic to $T^{k/2}$. In this case, $T^{k/2}\rtimes \Inn(T^{k/2})\leq G\leq T^{k/2}\rtimes \Aut(T^{k/2})$.

If $k=1$, then $G$ is of type $\AS$. If $G$ has type $\PA$, then it acts faithfully on some partition $\mathcal{P}$ of $\Omega$ that can be identified with a set $\Delta^k$ with $|\Delta|\geq 5$ and $k\geq 2$. In particular, $G$ embeds in $H\wr\sym_k$, where $H$ is a  quasiprimitive group on $\Delta$ of type $\AS$. Moreover, in cases $\AS$ and $\PA$, there is a proper subgroup $R$ of $T$ such that $M_\omega$ is isomorphic to a subgroup of $R^k$. 

We also note that if $G$ has type  $\HA$ or  $\TW$, then $M$ is regular. The only other case where $M$ can be regular is if $G$ has type $\AS$. Finally, we note the degree of $G$ according to its type in Table~\ref{TblONS}.

\begin{center}
\begin{table}[!ht]
\caption{The degree of a quasiprimitive group as a function of its~type.}\label{TblONS}
\begin{tabular}{r|cccccccc}
\toprule
  &$\HA$&$\HS$&$\HC$&$\TW$&$\AS$&$\SD$&$\CD$&$\PA$\\ \hline
Degree &$p^k$&$|T|$&$|T|^{k/2}$&$|T|^k$&$-$&$|T|^{k-1}$&$|T|^{k-k/\ell}$& $mx^k$\\
Constraints& & &$k\ge4$&$k\ge2$& &$k\ge2$&$2\le\ell\mid k$& $m\geq 1$, $x\geq 5$, $k\geq 2$ \\
\bottomrule
\end{tabular}
\end{table}
\end{center}

\begin{definition}\label{def:comp}
For a finite group $G$, we denote by $[G]$ the multiset of composition factors in a composition series for $G$. We write $[A][B]$ for the union $[A]\cup [B]$ of multisets.
\end{definition}

The following result about quasiprimitive groups is used in our proof of Theorem~\ref{theo:main2}.

\begin{lemma}\label{CompFactorsPA}
If $G$ is a quasiprimitive group of type $\AS$ or $\PA$, then $[\soc(G)]\not\subseteq[G_v]$.
\end{lemma}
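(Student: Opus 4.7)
The plan is to write $M=\soc(G)=T^k$ with $T$ a nonabelian simple group, where $k=1$ in type $\AS$ and $k\ge2$ in type $\PA$, and then to bound the multiplicity $c_T(G_v)$ of $T$ as a composition factor of $G_v$. Since $[T^k]\subseteq[G_v]$ is equivalent to $c_T(G_v)\ge k$, it suffices to show $c_T(G_v)<k$, and the natural strategy is to split $[G_v]$ along the series $1\lhdeq M_v\lhdeq G_v$ and handle the two pieces separately.

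First I would show that $c_T(M_v)=0$. In both types the paper records that $M_v$ embeds in $R^k$ for some proper subgroup $R<T$. Applying Lemma~\ref{lem:cf} to a composition series of $R^k$, every simple section of $R^k$ is a simple section of some composition factor of $R$, hence has order at most $|R|<|T|$; in particular $T$ is not a simple section of $M_v$, so $c_T(M_v)=0$. Next, since $G$ is quasiprimitive and $M$ is a nontrivial normal subgroup, $M$ is transitive on $\Omega$. Comparing orbits then gives $G=G_vM$ and therefore $G_v/M_v\cong G/M$, so the problem reduces to proving $c_T(G/M)<k$.

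For type $\AS$, $k=1$ and $G/M$ embeds in $\Out(T)$, which is soluble by the Schreier conjecture (known via CFSG), so $c_T(G/M)=0<1$. For type $\PA$, $k\ge2$ and $G\le\Aut(T)\wr\sym_k$, so passing to the quotient by $M=T^k$ gives $G/M\le\Out(T)\wr\sym_k=\Out(T)^k\rtimes\sym_k$. Setting $N=(G/M)\cap\Out(T)^k$, the group $N$ is soluble (as a subgroup of the soluble group $\Out(T)^k$), so $c_T(N)=0$, while $(G/M)/N$ embeds in $\sym_k$ and hence has order dividing $k!$. If $c_T((G/M)/N)\ge k$ then $|T|^k$ would divide $k!$, contradicting Lemma~\ref{L1}(a) since $|T|\ge60>1$ and $k\ge2$. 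Combining, $c_T(G_v)=c_T(M_v)+c_T(G/M)<k$.

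The bookkeeping is routine once these two reductions are in place. The only conceptual steps are recognising (i) that the trivial order inequality $|R|<|T|$, via Lemma~\ref{lem:cf}, already kills the $M_v$-piece, and (ii) that $G/M$ is soluble-by-(subgroup of $\sym_k$), so that Lemma~\ref{L1}(a) handles the $\sym_k$-piece uniformly. No case analysis on $T$ should be required, and the main obstacle is really just making sure the structural reduction $G_v/M_v\cong G/M\le\Out(T)\wr\sym_k$ is correctly invoked for the $\PA$ case.
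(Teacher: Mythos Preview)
Your proof is correct and follows essentially the same route as the paper's: show that $T$ is not a composition factor of $M_v$ via $M_v\leqslant R^k$ with $R<T$, identify $G_v/M_v\cong G/M\lesssim\Out(T)\wr\sym_k$, and use solubility of $\Out(T)$ together with Lemma~\ref{L1}(a) to rule out $|T|^k\mid k!$. The only differences are cosmetic---you separate the $\AS$ and $\PA$ cases and invoke $|T|\ge 60$ where the paper uses $4\mid|T|$---but the argument is the same.
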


\begin{proof}
Let $M=\soc(G)\cong T^k$ and suppose that $[\soc(G)]\subseteq [G_v]$, that is $[T^k]\subseteq [G_v]$. We seek a contradiction. As discussed above, in cases $\AS$ and $\PA$, there is a proper subgroup $R$ of $T$ such that $M_v\leqslant R^k$. In particular, $T$ is not a composition factor of $M_v$ and thus $[T^k]\subseteq [G_v/M_v]$. On the other hand, $G_v/M_v\cong G_vM/M\leqslant G/M\lesssim \Out(T)\wr \Sym(k)$.  By the Schreier conjecture, $\Out(T)$ is soluble, and so $|T|^k$ divides $k!$. However, $4$ divides $|T|$,  contradicting Lemma \ref{L1}.
\end{proof}

We also need the following result.

\begin{lemma}\label{lem:outT}
Let $G$ be a quasiprimitive permutation group of type $\AS$ on a set $\Omega$ and with socle~$T$. Then $|\Omega|>|\Out(T)|$.
\end{lemma}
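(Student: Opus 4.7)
My plan is to reduce the lemma to the purely group-theoretic inequality $\mu(T) > |\Out(T)|$ for every finite nonabelian simple group $T$, where $\mu(T)$ denotes the minimum index of a proper subgroup of $T$, and then to verify this inequality case by case using the classification of finite simple groups. Since $G$ is quasiprimitive, its socle $T$ is transitive on $\Omega$, so $|\Omega| = [T : T_v]$ for any $v \in \Omega$. Because $T$ is nonabelian simple and acts nontrivially, $T_v$ is a proper subgroup of $T$, whence $|\Omega| \geq \mu(T)$. The lemma therefore follows once $\mu(T) > |\Out(T)|$ is established in full generality.

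The inequality $\mu(T) > |\Out(T)|$ can be checked against the classification. For $T = \Alt(n)$ with $n \geq 5$, one has $\mu(T) = n$ (except $\mu(T) = 6$ when $n = 6$) while $|\Out(T)| \leq 4$, so the bound is immediate. For sporadic simple groups, $|\Out(T)| \leq 2$ whereas $\mu(T) \geq 11$, the minimum being attained by the Mathieu group $\MM_{11}$. For simple groups of Lie type, $|\Out(T)|$ factors as a product of diagonal, field, and graph contributions, each small compared with the degree of any of the natural actions on points, lines, or other geometric objects; tabulated lower bounds on $\mu(T)$ (for example, from the Kleidman--Liebeck analysis of maximal subgroups of the classical groups, together with explicit tables for the exceptional groups) dispose of the remaining cases uniformly.

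I expect the Lie-type analysis to be the main technical obstacle, since one must confirm that none of the low-rank, twisted, or sporadic-at-small-$q$ families admits a proper subgroup of sufficiently small index to violate the bound. However, minimal permutation degrees of simple groups of Lie type grow polynomially in the field size, while $|\Out(T)|$ grows only logarithmically, so no genuine exceptions arise and a finite explicit check handles the small cases. Once $\mu(T) > |\Out(T)|$ is confirmed, the chain $|\Omega| \geq \mu(T) > |\Out(T)|$ finishes the proof.
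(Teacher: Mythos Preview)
Your argument is correct, but it differs from the paper's. The paper does not verify $\mu(T)>|\Out(T)|$ directly; instead it passes to a maximal $G$-invariant partition $\mathcal{P}$ of $\Omega$, observes that $G$ acts faithfully and primitively on $\mathcal{P}$ (faithfulness follows from quasiprimitivity, primitivity from maximality of $\mathcal{P}$), and then invokes \cite{GMP}*{Theorem~1.2} to get $|\mathcal{P}|>|\Out(T)|$, whence $|\Omega|\ge|\mathcal{P}|>|\Out(T)|$. In other words, the paper outsources the entire classification-based verification to the Guralnick--Mar\'oti--Pyber result on normalisers of primitive groups. Your approach is more elementary in that it avoids the detour through a primitive quotient action: the bound $|\Omega|\ge\mu(T)$ is immediate once you note that $T$ is transitive with $T_v\lneq T$. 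On the other hand, you then have to re-prove (a special case of) what \cite{GMP} already packages. Both routes rest on the classification of finite simple groups; the paper's is shorter on the page because the case analysis is hidden in a citation, while yours is more self-contained but leaves the Lie-type check as a sketch that would need tables (e.g.\ the Kleidman--Liebeck or Cooperstein bounds) to be made fully rigorous.
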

\begin{proof}
Let $\mathcal{P}$ be a maximal $G$-invariant partition of  $\Omega$. Since $G$ is quasiprimitive, it acts faithfully on $\mathcal{P}$ and the maximality of $\mathcal{P}$ implies that this action is primitive. By \cite{GMP}*{Theorem 1.2} we have that $|\mathcal{P}|>|\Out(T)|$ and so $|\Omega|>|\Out(T)|$.
\end{proof}

\section{Proof of Theorem~\ref{theo:main}}\label{S2}

Let $G$ and $H$ be quasiprimitive groups on the same finite set $\Omega$. Assume that $H\cong G/K$ for some $K>1$, that is, $H$ is  isomorphic to a proper quotient of $G$.

\subsection{$G$ has a simple socle}\label{sec:first}

Let $T=\soc(G)$. Note that $T$ is nonabelian  and simple, $T\leq K$ and thus  $G/K$ is isomorphic to a section of $\Out(T)$. By  the Schreier conjecture, $G/K$ is soluble and thus has type $\HA$ and $|\Omega|$ is a prime-power. Furthermore, $|\Omega|$ divides $|G/K|$ and thus $|\Out(T)|$.  This contradicts Lemma \ref{lem:outT}.

\subsection{$G$ has an abelian socle}

In this case, $G$ has prime power degree, say $p^k$. Since the order
of a nonabelian simple group is not a prime power, it
follows from Table~\ref{TblONS} that $G/K$ has type $\HA, \AS$, or
$\PA$. The case when $G/K$ has type $\AS$ or $\PA$ appears in the statement of
Theorem~\ref{theo:main}. It remains to exclude the case when $G/K$ has
type $\HA$.

Note that $G$ has a unique minimal normal subgroup $M$ and it is
elementary abelian and regular. In particular, $M\leq K$. Let $G_v$ be
a point-stabiliser in $G$. By the second isomorphism theorem, $G/K$ is
isomorphic to a quotient of $G/M\cong G_v$, itself a subgroup of
$\GL(k,p)$. Recall that $G_v$ is an irreducible subgroup of
$\GL(k,p)$. By Proposition~\ref{compfactors}, $G_v$ has at most $k-1$
composition factors of order $p$. Since $G/K$ is of type $\HA$ and $|\Omega|=p^k$, its
socle is also elementary abelian of order $p^k$, a contradiction.

\subsection{$G$ has a nonabelian and nonsimple socle}
Let $M=\soc(G)$. Note that $M=T^k$  with $T$ nonabelian simple and $k\geq 2$.

\subsubsection{$M\leq K$}
Let $\overline{\phantom{n}}$ be the natural epimorphism
$G\twoheadrightarrow G/M$. Recall that $$G\lesssim\Aut(M)\cong\Aut(T)
\wr \sym_k$$ and thus $\overline{G}\lesssim\Out(T) \wr
\sym_k=\Out(T)^k \rtimes \sym_k$. We will consider $\overline{G}$ as a
subgroup of $\Out(T)^k \rtimes \sym_k$.  Let
$X=\Out(T)^k\cap\overline{G}$.

Suppose first that $X\le\overline{K}$ and hence $G/K$ is a section of
$\sym_k$. However, $|\Omega|$ divides $|G/K|$ and hence $k!$. If $G$
has type $\PA$ then $|\Omega|=mx^k$ for some $x\geq 5$ and $m\geq 1$,
contradicting Lemma~\ref{L1}(\ref{first}). Similarly, if $G$ has type
$\TW$ then $|T|^k$ divides $k!$, a contradiction. If $G$ has type
$\HC$ or $\HS$, then $k$ is even and $|T|^{k/2}$ divides $k!$. As
$4$ divides $|T|$, this again contradicts Lemma~\ref{L1}(\ref{first}).
If $G$ has type $\CD$, then $|\Omega|=|T|^{k-k/\ell}$ and this can not
divide $k!$ by Lemma~\ref{L1}(\ref{second}). The same argument applied
with $k=\ell$ yields that $G$ cannot have type $\SD$.

We may thus assume that $X\not\le\overline{K}$. Recall that, by the Schreier conjecture, $\Out(T)^k$ is soluble. In particular,
$\overline{G}/\overline{K}$ has a nontrivial soluble normal subgroup,
namely $X\overline{K}/\overline{K}$. Since
$\overline{G}/\overline{K}\cong G/K$, it follows that $G/K$ has an
abelian minimal normal subgroup and thus $G/K$ has type $\HA$ and
degree a power of a prime, say $p$. Note that $p$ divides $|X|$ and
thus also divides $|\Out(T)|$. Furthermore, as $H\cong G/K$ is quasiprimitive,
$X\overline{K}/\overline{K}$ is transitive.

Since $G/K$ has prime-power degree, so does $G$. If $G$ has type $\HS,
\HC, \TW, \SD$ or $\CD$, this is a contradiction as the degree of $G$
is divisible by $|T|$ as per Table~\ref{TblONS}.

We may thus assume that $G$ has type $\PA$. Here $|\Omega|=mx^k$ for some
$m\geqslant 1$, $x\geqslant 5$ and $k\geqslant 2$.   It follows from the
outline of the types in Section~\ref{sec:types} and Table~\ref{TblONS}
that $G\le Q\wr\sym_k$ where $Q$ is a quasiprimitive group of  degree $x$ and of $\AS$ type with $\soc(Q)\cong T$.  By Lemma \ref{lem:outT},  $|\Out(T)|<x$. This contradicts the fact that $mx^k$ divides $|X\overline{K}/\overline{K}|$ (as $X\overline{K}/\overline{K}$ is transitive of degree $mx^k$).

\subsubsection{$M\not\le K$}
In particular, $G$ has more than one minimal normal subgroup and thus
has type $\HS$ or $\HC$, and it has exactly two minimal normal
subgroups $N_1$ and $N_2$. We have $N_1\cong N_2\cong T^\ell$ for some
nonabelian simple group $T$ and thus $M=N_1\times N_2\cong T^{2\ell}$.

Let $\overline{\phantom{n}}$ be the natural epimorphism
$G\twoheadrightarrow G/K$. We may assume, without loss of generality,
that $N_1\le K$ and $N_2\not\le K$. In particular, $K\cap N_2=1$ and
hence $\overline{N_2}\cong N_2\cong T^\ell$ is a nontrivial minimal
normal subgroup of $\overline{G}$. Since $N_1\times N_2\leq G\lesssim
N_1 \rtimes\Aut(N_1)$, we have $\overline{N_2}\leq
\overline{G}\lesssim \Aut(T^\ell)$ and thus
$\soc(\overline{G})=\overline{N_2}$. Since $\overline{G}$ is
quasiprimitive, $\soc(\overline{G})$ is transitive. Moreover,
$\overline{G}$ has the same degree as $G$, namely
$|N_2|=|\soc(\overline{G})|$ and thus $\soc(\overline{G})$ is
regular. By Table~\ref{TblONS}, it follows that $\overline{G}$ has
type $\TW$ or type $\AS$ with a regular socle. If $\ell=1$ then $G$
has type $\HS$, while if $\ell\geq 2$ then $G$ has type $\HC$. These
cases appear in Theorem~\ref{theo:main}.

\section{Proof of Theorem~\ref{theo:main2}}\label{S1}

Let $L_-$ and $L_+$ be compatible quasiprimitive groups that are not isomorphic to each other. By the result of Knapp mentioned earlier (\cite{Knapp}*{Theorem~3.3}), we may assume that $L_+$ is a proper quotient of $L_-$. Since $L_+$ and $L_-$ have the same degree, we may apply Theorem~\ref{theo:main} to conclude that the pair $(L_-,L_+)$ has type one of $(\HS,\AS)$, $(\HC,\TW)$, $(\HA,\AS)$ or $(\HA,\PA)$. It thus only remains to exclude the last two possibilities. 

In view of obtaining a contradiction, we assume that $L_-$ has type $\HA$, and $L_+$ has type $\AS$ or $\PA$. By Theorem~\ref{theo:ProduceExamples}, there exists a finite group $H$ with two isomorphic subgroups $H_-$ and $H_+$ such that, for $\eps\in\{\pm\}$, the permutation group induced by the action of $H$ on the set of cosets of $H_\eps$ is permutation isomorphic to $L_\eps$. 

For $\eps\in\{\pm\}$, let $K_\eps$ be the core of $H_\eps$ in $H$ (so that $H/K_\eps\cong L_\eps$) and let $N_\eps/K_\eps$ be the unique minimal normal subgroup of $H/K_\eps$. Since $L_+$ has type $\AS$ or $\PA$, $N_+/K_+\cong T^k$ for some nonabelian simple group~$T$, and since $L_-$ has type $\HA$, $N_-/K_-\cong \C_p^d$ for some prime~$p$. Moreover, $H/N_-$ is an irreducible subgroup of $\GL(d,p)$ isomorphic to $H_-/K_-$. 

As $|L_+|<|L_-|$, we have $|K_+|>|K_-|$ therefore $K_-< K_-K_+$ and $K_-<N_-\le K_-K_+$. Using the notation of Definition~\ref{def:comp}, we have
$$
  [H_-]=[H_-/K_-][K_-/(K_-\cap K_+)][K_-\cap K_+]=[H_-/K_-][K_-K_+/K_+][K_-\cap K_+].
$$
Similarly, since $K_-<N_-\le K_-K_+$ and $[N_-/K_-]=[\C_p^d]$ we have
$$
  [H_+]=[H_+/K_+][K_-K_+/K_-][K_-\cap K_+]=[H_+/K_+][K_-K_+/N_-][\C_p^d][K_-\cap K_+].
$$
Since $H_-\cong H_+$, we have $[H_-]=[H_+]$. Canceling $[K_-\cap K_+]$ gives
\begin{equation}\label{supereq}
  [H_-/K_-][K_-K_+/K_+]=[H_+/K_+][K_-K_+/N_-][\C_p^d].
\end{equation}

If $K_+=K_-K_+$, then~\eqref{supereq} implies $[\C_p^d]\subseteq [H_-/K_-]$, which contradicts Proposition~\ref{compfactors}. Therefore  $K_+<K_-K_+$, and hence $K_+<N_+\le K_-K_+$. Now~\eqref{supereq} becomes:
$$
  [H_-/K_-][K_-K_+/N_+][T^k] =[H_+/K_+][K_-K_+/N_-][\C_p^d].
$$
Since $[H_-/K_-]=[H/N_-]=[H/K_-K_+][K_-K_+/N_-]$, we cancel $[K_-K_+/N_-]$ to get
$$
  [H/K_-K_+][K_-K_+/N_+][T^k] =[H_+/K_+][\C_p^d].
$$
Therefore $[T^k]\subseteq [H_+/K_+]$. This contradicts Lemma~\ref{CompFactorsPA} and concludes the proof.

\section{Examples for Theorems~\ref{theo:main2} and~\ref{theo:main}}\label{sec:examples}

We conclude by showing that the exceptional types that occur in Theorems~\ref{theo:main2} and~\ref{theo:main} cannot be avoided.

\begin{example}\label{ExampleHCTW}
Let $T$ be a nonabelian simple group, let $k\geq 1$ be an integer and let $N=T^k$. Let $H_-=\Inn(N)\rtimes S\leqslant\Aut(N)$, where $S\cong \sym_k$. Let $H=N\rtimes H_-$ and $H_+=N\rtimes S\leqslant H$. Since $T$ is centreless, $H_+\cong H_-$. For $\eps\in\{\pm\}$, let $L_\eps$ be the permutation group induced by the action of $H$ on the cosets of $H_\eps$. Then $L_-$ and $L_+$ are both quasiprimitive. If $k=1$, the type of $(L_-,L_+)$ is $(\HS,\AS)$ while if $k\geq 2$, the type is $(\HC,\TW)$. By Theorem~\ref{theo:ProduceExamples}, these permutation groups are compatible.
\end{example}

Example~\ref{ExampleHCTW} shows that the exceptions in Theorem~\ref{theo:main2} are necessary. By the result of Knapp mentioned in Section~\ref{sec:intro} (\cite{Knapp}*{Theorem~3.3}), any example for Theorem~\ref{theo:main2} will necessarily give an example for Theorem~\ref{theo:main}, and thus Example~\ref{ExampleHCTW} also provides examples of type $(\HS,\AS)$ and $(\HC,\TW)$ for Theorem~\ref{theo:main}. We now give examples of type $(\HA,\AS)$ and $(\HA,\PA)$.

\begin{example}\label{Ex:HAASPA} 
Observe that $\AGL(3,2)$ acts on $2^3=8$ affine points, and $\PSL(2,7)$ acts on $7+1=8$ projective points. Thus $\AGL(3,2)$ is a primitive group of type $\HA$ and degree $8$, while its quotient group $\GL(3,2)\cong\PSL(2,7)$ is a primitive group of type $\AS$, also of degree $8$. Let $k\geq 2$, let $P$ be a transitive group of degree $k$ and let $G=\AGL(3,2)\wr P$. In its natural action, this is a primitive permutation group of type $\HA$ and degree $8^k$. Let $N$ be the socle of $G$. Note that $G/N\cong \GL(3,2)\wr P\cong\PSL(2,7)\wr P$ and $G/N$ can be viewed as a primitive group of type $\PA$ and degree $8^k$.
\end{example}

It is reasonable to ask whether there are any other examples of type $(\HA,\AS)$ or $(\HA,\PA)$ for Theorem~\ref{theo:main}.

\end{document}